\newtheorem{theorem}{Theorem}
\newtheorem{lemma}[theorem]{Lemma}
\newtheorem{corollary}[theorem]{Corollary}
\newtheorem{proposition}[theorem]{Proposition}
\newcommand{\Z}{\mathbb{Z}}
\newcommand{\R}{\mathbb{R}}
\renewcommand{\P}{\mathbb{P}}
\newcommand{\E}{\mathbb{E}}
\newcommand{\Po}{\mathbf{P}}
\newcommand{\Eo}{\mathbf{E}}
\newcommand{\Pu}{\mathrm{P}}
\newcommand{\Eu}{\mathrm{E}}
\def\beq{\begin{equation}}
\def\eeq{\end{equation}}
\def\bay{\begin{array}}
\def\eay{\end{array}}
\def\eps{\epsilon}
\newcommand{\1}{\mathbf{1}}
\newcommand{\G}{\mathcal{G}}
\newcommand{\cir}{\bullet}
\newcommand{\tom}[1]{#1}
\author[E. Foxall]{Eric Foxall}
\address{Department of Mathematical and Statistical Sciences, Arizona State University}
\email{e.t.foxall@gmail.com}
\author[T. Hutchcroft]{Tom Hutchcroft}
\address{Department of Pure Mathematics and Mathematical Statistics, University of Cambridge} 
\email{t.hutchcroft@maths.cam.ac.uk}
\author[M. Junge]{Matthew Junge}
\address{Department of Mathematics, Duke University}
\email{jungem@math.duke.edu}
\title{Coalescing random walk on unimodular graphs}
\begin{document}
\maketitle

\begin{abstract}
Coalescing random walk on a unimodular random rooted graph for which the root has finite expected degree visits each site infinitely often almost surely. A corollary is that an opinion in the voter model on such graphs has infinite expected lifetime. Additionally, we deduce  an adaptation of our main theorem that holds uniformly for coalescing random walk on finite random unimodular graphs with degree distribution stochastically dominated by a probability measure with finite mean. 
\end{abstract}

\section{Introduction}

\emph{Coalescing random walk} (CRW) starts with one particle at each vertex of a locally finite, connected and undirected graph.
 Each particle then performs a continuous time edge-driven random walk, jumping along each edge adjacent to its present location according to a unit intensity Poisson process. Call the process \emph{site recurrent} if every site is visited infinitely often almost surely.
 
   Griffeath proved that CRW is site recurrent on $\mathbb Z^d$ \cite{griffeath}. Benjamini et.\ al.\ recently showed CRW is site recurrent on any bounded degree graph \cite{crw}. As for unbounded degree graphs, \cite[Theorem 2 (ii)]{crw} shows the process is site recurrent on any Galton-Watson tree whose offspring distribution has an exponential tail.

Our goal is to show that CRW is site recurrent on any \emph{unimodular random rooted graph} for which the root has finite expected degree. This is a  general class of random graphs that arises frequently in applications. A quick corollary is that opinions in the voter model on such graphs have infinite expected lifetime.  \tom{Previous works relating to CRW and the voter model on random graphs include \cite{MR3052399,tom}. In particular, \cite{tom} establishes that every pair of particles eventually coalesce a.s.\ in any \emph{recurrent} unimodular random rooted graph.
}
   
Before we give the definition, we start with a few interesting examples of transient unimodular random graphs with unbounded, but finite expected degree.
   
\begin{enumerate}[label = (\roman*)]
	\item An augmented Galton-Watson tree whose offspring distribution has finite expected mean. Augmented means that an extra child is added to the root. Note this is much more general than the exponential tail requirement needed in \cite[Theorem 2 (ii)]{crw}.
	\tom{
	\item Various random graphs obtained from $\Z^d$ or $\R^d$ for $d\geq 3$, including: Random geometric graphs defined in terms of point processes in $\R^d$ \cite{MR1986198,MR3406589}; Supercritical long-range percolation clusters in $\Z^d$; Graphs obtained from $\Z^d$ by replacing each edge with a random number of parallel edges with finite mean in some translation-invariant way. 
	\item Curien's planar stochastic hyperbolic triangulations, which are transient versions of the uniform infinite planar triangulation \cite{MR3520011}; Various random graphs obtained from hyperbolic space $\mathbb{H}^d$ for $d\geq 2$, such as hyperbolic random geometric graphs built from point processes in $\mathbb{H}^d$ \cite{benjamini2014anchored}.
	}
\end{enumerate}
\tom{
Furthermore, combining our result with a simple compactness argument also yields that, whenever $G$ is a \emph{finite} graph whose degree distribution is stochastically dominated by some integrable reference distribution $\mu$, CRW ``looks recurrent" on $G$ from the perspective of most vertices, in a quantitative way that depends only on the reference distribution $\mu$. See Corollary \ref{cor:quantitative} for a precise statement.
}

\subsection*{Definitions and theorem statement}
To state our theorem we develop the framework and definitions more carefully. As in \cite{ergodic}, a \emph{rooted graph} is a pair $(G,\rho)$ with $G=(V,E)$ and $\rho \in V$. An isomorphism between two rooted graphs is a graph isomorphism that maps the roots of the graphs to each other. Let $\G_{\cir}$ denote the set of isomorphism classes of locally finite, connected rooted graphs with no loops or multiple edges. Similarly we have $\G_{\cir\cir}$, the set of isomorphism classes of bi-rooted graphs $(G,x,y)$. When there is no cause for confusion, we use $(G,\rho)$ to refer interchangeably to an element of $\G_{\cir}$, or to a representative of its class, and similarly for $\G_{\cir\cir}$. 

Coalescing random walk is well-defined on isomorphism classes of rooted graphs, since its distribution does not depend on the choice of representative. The same is true for random walk, with the understanding that the starting point of the walk may depend on the isomorphism class, and is mapped to an isomorphic location when passing between representatives. These two technicalities can be surmounted by choosing a root as the starting point of the walk.

A random rooted graph $(G,\rho)$ is a random variable with values in $\G_{\cir}$. Following \cite{ergodic} we use $\Po$ and $\Eo$ to denote probability and expectation for a random rooted graph. We use $\Pu_{\rho}^G$ and $\Eu_{\rho}^G$ to denote the \emph{quenched} probability and expectation of a process taking place on $(G,\rho)$, \tom{that is, the conditional law of the process given $(G,\rho)$}. We use $\P$ and $\E$ to denote \tom{probabilities and expectations with respect to \emph{annealed} measure, that is, the} joint distribution of $(G,\rho)$ and the process on $(G,\rho)$.

Before stating our theorem we give two definitions from \cite{ergodic}.
Let $(G,\rho)$ denote a random rooted graph and let $(X_n)_{n\geq 0}$ denote random walk on $(G,\rho)$ with $X_0=\rho$. Then $(G,\rho)$ is \emph{stationary} if 
$$(G,X_0) = (G,X_1)\quad\textrm{in distribution}.$$
To introduce unimodularity we require something called the \emph{mass-transport principle} or MTP for short. A random rooted graph $(G,\rho)$ satisfies the MTP if for every Borel positive function $F\colon \G_{\cir\cir} \to \R_+$ we have
\begin{equation}\label{eq:mtp}
\Eo \left[ \sum_{x \in V}F(G,\rho,x) \right] = \Eo \left[ \sum_{x \in V}F(G,x,\rho) \right].
\end{equation}
A random rooted graph $(G,\rho)$ is \emph{unimodular} if it satisfies \eqref{eq:mtp}.
 We think of $F(G,\rho,x)$ as an amount of mass sent from $\rho$ to $x$.  The above says that the average mass sent from $\rho$ to other vertices is the same as the average mass received by $\rho$.
Note that every finite graph can be made into a unimodular random rooted graph by rooting it at a uniformly random vertex.
Unimodular random rooted graphs were introduced by Benjamini and Schramm \cite{MR1873300} and developed systematically by Aldous and Lyons \cite{aldous_lyons}.

With this notation we can state our theorem.

\begin{theorem}\thlabel{thm:main}
CRW is site recurrent $\mathbb P$-almost surely on any \tom{unimodular random rooted} graph $(G,\rho)$ for which $\mathbf E[\deg(\rho)]<\infty$. 
\end{theorem}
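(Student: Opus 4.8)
The plan is to combine an annealed zero--one law with first- and second-moment estimates fuelled by the mass-transport principle \eqref{eq:mtp}.

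\emph{Step 1: reductions.} If $G$ is recurrent the statement is immediate, since the trajectory $W^\rho$ of the particle born at $\rho$ is marginally a continuous-time random walk on $G$ (coalescence does not alter its law) and therefore sits at $\rho$ at arbitrarily large times; so assume $G$ is transient. Because $G$ is connected, ``$\rho$ is occupied at arbitrarily large times'' is equivalent to ``every vertex is occupied at arbitrarily large times'': if $x$ is occupied at times $t_1<t_2<\cdots\to\infty$ and $y\sim x$, the events $\{\text{the particle occupying }x\text{ at time }t_i\text{ jumps next directly onto }y\}$ have conditional probability $1/\deg(x)$ given the past, so by the conditional Borel--Cantelli lemma a particle lands on $y$ at unbounded times. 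Hence the event $E=\{\text{CRW is site recurrent}\}$ is independent of the root, and, depending only on the process after any time $s$, is a tail event; adjoining the driving Poisson clocks as i.i.d.\ edge marks (which preserves unimodularity) and applying the ergodic decomposition of unimodular random rooted graphs reduces us to the case where $(G,\rho)$ is ergodic, so that $\P(E)\in\{0,1\}$. Since $E=\bigcap_{s\ge 0}\{\rho\text{ occupied at some time}\ge s\}$ and these events decrease in $s$, it now suffices to bound $\P(\rho\text{ occupied at some time}\ge s)$ below by a constant not depending on $s$. The hypothesis $\mathbf E[\deg\rho]<\infty$ already enters here: biasing by $\deg(\rho)$ gives the stationary reversible version of $(G,\rho)$, along whose walk the expectation of $1/\deg(X_n)$ equals $1/\mathbf E[\deg\rho]>0$, so by the ergodic theorem $\sum_n 1/\deg(X_n)=\infty$ a.s.\ and the continuous-time walk is non-explosive; in particular each $W^x$ is defined for all times and has infinite range after any time.

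\emph{Step 2: first moment.} Fix $s$. Applying \eqref{eq:mtp} (in its standard extension to networks with i.i.d.\ edge marks) to $F(G,x,y)=\mathbf 1[\,W^x\text{ visits }y\text{ at some time}\ge s\,]$ gives
\[
\E\bigl[\#\{x\in V: W^x\text{ visits }\rho\text{ at some time}\ge s\}\bigr]=\E\bigl[\#\{y\in V: W^\rho\text{ visits }y\text{ at some time}\ge s\}\bigr]=\infty ,
\]
the last equality by non-explosion. As any visit of some $W^x$ to $\rho$ at a time $\ge s$ witnesses $\rho$ being occupied then, this already shows $\P(\rho\text{ occupied at some time}\ge s)>0$ for every $s$.

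\emph{Step 3: second moment, and the main obstacle.} To make the bound of Step 2 uniform in $s$ --- equivalently, by Step 1, to obtain $\P(E)=1$ --- the plan is a second-moment argument on $N=N_{s,T}:=\#\{x\in V: W^x\text{ visits }\rho\text{ during }[s,T]\}$ for a large but finite $T$. Then $\{\rho\text{ occupied at some time}\ge s\}\supseteq\{N\ge 1\}$, so $\P(\rho\text{ occupied at some time}\ge s)\ge(\E N)^2/\E[N^2]$, and by the computation of Step 2 with $T<\infty$ we have $\E[N]=\E\bigl[\,|\text{range of }W^\rho\text{ on }[s,T]|\,\bigr]$, which tends to $\infty$ as $T\to\infty$ with $s$ fixed. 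The task is then to show $\E[N^2]\le C(\E N)^2$ with $C$ independent of $s$ (for a suitable $T=T(s)$). Here one uses the structural fact that any two coalescing trajectories $W^x,W^y$ evolve as \emph{independent} continuous-time random walks until they first meet: on the event that they have not met before both have visited $\rho$, the pair is governed by independent walks, and the analogous sum $\sum_{x,y}\P(\text{independent walks from }x,y\text{ both visit }\rho\text{ in }[s,T])$ equals $(\E N)^2$ by one more application of \eqref{eq:mtp}; this leaves a correction term accounting for pairs $x,y$ whose walks coalesce before either reaches $\rho$.

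I expect this correction to be the crux of the whole proof. On a transient graph a single visit of $\rho$ at a large time may be produced by an entire cluster of merged initial walks, so the crude bound on the correction, of order $\sum_x \P\bigl(W^x\text{ visits }\rho\text{ in }[s,T]\bigr)\,\E\bigl[\,|\text{eventual cluster of }x|\,\bigr]$, diverges whenever eventual clusters are infinite. Overcoming this should require organising the double sum by the clusters present at time $s$ and charging each late visit of $\rho$ to a carefully chosen, summably weighted representative of the relevant cluster, and this is presumably the other place where $\mathbf E[\deg\rho]<\infty$ is used decisively --- high-degree vertices being exactly the places at which many walks can coalesce rapidly. Once $\E[N^2]\le C(\E N)^2$ is established, Step 1 upgrades the resulting uniform lower bound to $\P(E)=1$, i.e.\ site recurrence.
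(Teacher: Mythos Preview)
Your Steps 1 and 2 are essentially sound, but the proposal is not a proof: Step 3 is left open. You correctly isolate the obstacle --- the ``correction term'' coming from pairs $(x,y)$ whose walks have already coalesced before reaching $\rho$ --- but you do not control it, and the heuristic you sketch (charging late visits to weighted cluster representatives) is not carried out. As you observe yourself, the crude bound diverges on transient graphs, so the missing step is not routine.

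The paper's argument circumvents precisely this difficulty by running the second-moment method on a different random variable. By the CRW--voter model duality one has $P_\rho^G(\rho\text{ occupied at time }t)=P_\rho^G(|\zeta_t^\rho|>0)$, where $\zeta_t^\rho$ is the voter cluster started at $\rho$. One then applies Cauchy--Schwarz to $|\zeta_t^\rho|$, not to your $N_{s,T}$. The first moment is exactly $1$ (Lemma~\ref{lem:mtg}: $|\zeta_t^\rho|$ is a supermartingale, and the size-biasing identity forces its annealed mean to be $1$). The second moment is handled by Proposition~\ref{prop:main}: via the mass-transport principle, $\E[|\zeta_t^\rho|^2]=\E[|\zeta_t^{(\rho)}|]$, the expected size of the cluster \emph{containing} $\rho$; duality turns this into the expected number of CRW particles that have merged with the particle from $\rho$ by time $t$, and a coupling that ``ignores'' collisions along the distinguished path bounds this by $1+2\int_0^t \E[\deg(X_s)]\,ds = 1+2t\,\mathbf{E}[\deg\rho]$. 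Cauchy--Schwarz then gives $P_\rho^G(|\zeta_t^\rho|>0)\ge 1/(ct)$ along a sequence of times, so $\int_0^\infty P_\rho^G(B_t)\,dt=\infty$, and site recurrence follows.

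The key conceptual difference is that $|\zeta_t^\rho|$ automatically ``mods out'' by coalescence: its second moment equals the first moment of the \emph{size-biased} cluster, which is exactly the object your correction term was trying to get a handle on. In your variable $N_{s,T}$ a single visit of a large coalesced cluster inflates both moments, and separating the contributions is hard; the voter-model quantity bakes this cancellation in from the start. Note also that the paper needs no ergodic decomposition or zero--one law: a Markov/reverse-Fatou argument on the quenched second moment suffices.
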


It is quite easy to construct counterexamples to show that the assumption $\mathbf E[\deg(\rho)]<\infty$ is necessary, for example by taking a canopy tree and replacing each edge at height $n$ with a large number of parallel edges.

The proof of \thref{thm:main} uses the well-known duality between CRW and the voter model. The \emph{voter model} is a process on $(G,\rho)$ in which each site has a unique opinion which it spreads to neighbors at rate 1 (along each edge). Let $\zeta_t^\rho$ be the set of vertices at time $t$ with the opinion started at $\rho$. The voter model is dual to CRW in the sense that $\zeta_t^\rho$ is equal in distribution to the set of particles in CRW that have coalesced and occupy site $\rho$ at time $t$. We obtain \thref{thm:main} by proving a linear bound on the annealed second moment of $|\zeta_t^\rho|$.

\begin{proposition}\thlabel{prop:main} 
Let $(G,\rho)$ be a unimodular random rooted graph. Then $\mathbb E |\zeta_t^\rho|^2 \leq 1 + 2t\mathbf E[\deg(\rho)]$ for every $t\geq 0$.
\end{proposition}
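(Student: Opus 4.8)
The plan is to differentiate $\E|\zeta^\rho_t|^2$ in $t$ using the voter dynamics and then control the resulting edge-boundary term with a single application of the mass-transport principle. Throughout I use the standard voter duality in the form $\zeta^\rho_t = \{v\in V : \xi^v_t = \rho\}$, where $(\xi^v_s)_{s\ge 0}$ is the edge-driven continuous-time random walk started at $v$ built from the graphical representation; writing $p^G_s(v,w):=\Pu^G[\xi^v_s=w]$ for the random-walk heat kernel, we have $\Pu^G[v\in\zeta^\rho_s]=p^G_s(v,\rho)$.

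First, a generator computation. For a finite set $\zeta\subseteq V$ let $\beta(\zeta)$ denote the number of edges of $G$ with exactly one endpoint in $\zeta$. Under the voter dynamics $|\zeta^\rho_s|$ increases by $1$ at rate $\beta(\zeta^\rho_s)$ (a vertex outside $\zeta^\rho_s$ copies $\rho$'s opinion across a boundary edge) and decreases by $1$ at the same rate $\beta(\zeta^\rho_s)$ (a vertex inside copies a competing opinion). Since $(n+1)^2-n^2=2n+1$ and $(n-1)^2-n^2=-2n+1$, the terms linear in $|\zeta^\rho_s|$ cancel and the infinitesimal drift of $|\zeta^\rho_s|^2$ is $2\beta(\zeta^\rho_s)$; equivalently, $|\zeta^\rho_t|^2-\int_0^t 2\beta(\zeta^\rho_s)\,ds$ is a local martingale with initial value $|\zeta^\rho_0|^2=1$.

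Next, I bound the edge boundary. Every boundary edge of $\zeta$ is incident to a vertex of $\zeta$, so $\beta(\zeta)\le\sum_{v\in\zeta}\deg(v)$, and therefore
\[
\E[\beta(\zeta^\rho_s)]\le\E\sum_{v\in V}\mathbf 1[v\in\zeta^\rho_s]\deg(v)=\Eo\sum_{v\in V}\deg(v)\,p^G_s(v,\rho).
\]
Applying \eqref{eq:mtp} with $F(G,x,y):=\deg(y)\,p^G_s(y,x)$ rewrites the right-hand side as $\Eo\sum_{v\in V}\deg(\rho)\,p^G_s(\rho,v)=\Eo[\deg(\rho)]$, using $\sum_v p^G_s(\rho,v)=1$. (Reversibility of $p^G_s$ with respect to the degree measure gives the identity $\sum_v\deg(v)p^G_s(v,\rho)=\deg(\rho)$ graph by graph, so one can also obtain this step without invoking unimodularity.) Hence $\E[\beta(\zeta^\rho_s)]\le\Eo[\deg(\rho)]$ for all $s\ge 0$.

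Assembling the two steps heuristically gives $\frac{d}{dt}\E|\zeta^\rho_t|^2=2\E[\beta(\zeta^\rho_t)]\le 2\Eo[\deg\rho]$ and hence the claim. To make this rigorous one must cope with the fact that a priori $\E|\zeta^\rho_t|^2$ could be infinite: I would run the martingale from the first step on the stopped process, taking $T_k:=\inf\{s:|\zeta^\rho_s|\ge k\}$, so that $\E|\zeta^\rho_{t\wedge T_k}|^2=1+2\,\E\int_0^{t\wedge T_k}\beta(\zeta^\rho_s)\,ds\le 1+2\int_0^t\E[\beta(\zeta^\rho_s)]\,ds\le 1+2t\,\Eo[\deg\rho]$, the stopping being legitimate because $\beta(\zeta^\rho_s)\le\sum_{v\in\zeta^\rho_s}\deg(v)$ has finite expected time-integral; then, since $\E|\zeta^\rho_t|=1$ (the same MTP computation without the $\deg$ factor) forces $|\zeta^\rho_t|<\infty$ a.s.\ and hence $T_k\uparrow\infty$, Fatou's lemma on the left yields $\E|\zeta^\rho_t|^2\le 1+2t\,\Eo[\deg\rho]$. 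I expect this integrability bookkeeping to be the only real obstacle; the load-bearing content — the cancellation that leaves drift $2\beta(\zeta^\rho_s)$ and the one use of the mass-transport principle — is short.
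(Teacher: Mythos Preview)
Your argument is correct and genuinely different from the paper's. The paper proceeds indirectly: it first uses the MTP to prove a size-biasing identity $\E[|\zeta_t^\rho|^2]=\E[|\zeta_t^{(\rho)}|]$, where $\zeta_t^{(\rho)}$ is the voter cluster \emph{containing} $\rho$, and then bounds $\E[|\zeta_t^{(\rho)}|]$ via a somewhat involved coupling of coalescing random walk in which the particle from $\rho$ is given priority and certain collisions are ignored; stationarity of the edge-driven walk under the unimodular measure is then used to evaluate $\E[\deg(X_s)]=\Eo[\deg(\rho)]$. Your route bypasses both the size-biasing lemma and the coupling entirely, computing the drift of $|\zeta_t^\rho|^2$ directly and controlling the resulting edge-boundary by $\sum_{v\in\zeta_t^\rho}\deg(v)$, whose annealed expectation is exactly $\Eo[\deg(\rho)]$ by one mass-transport. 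This is shorter and more transparent; the paper's approach, on the other hand, yields the intermediate quenched estimate $\Eu_\rho^G|\zeta_t^{(\rho)}|\le 1+2\int_0^t\Eu_\rho^G[\deg(X_s)]\,ds$ on any fixed non-explosive graph, which may be of independent interest.

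One small correction: your parenthetical remark that reversibility gives $\sum_v\deg(v)p_s^G(v,\rho)=\deg(\rho)$ graph by graph is not right. The edge-driven walk is reversible with respect to the \emph{counting} measure, so $p_s^G(v,\rho)=p_s^G(\rho,v)$ and hence $\sum_v\deg(v)p_s^G(v,\rho)=\Eu_\rho^G[\deg(X_s)]$, which need not equal $\deg(\rho)$ on a fixed graph; unimodularity (equivalently, stationarity of the edge-driven walk in the annealed sense) really is needed here. Your primary MTP computation already handles this correctly, so the error is confined to the aside. The localization you describe is fine: since $\E\int_0^t\beta(\zeta_s^\rho)\,ds\le t\,\Eo[\deg(\rho)]<\infty$, the process $|\zeta_s^\rho|$ has a.s.\ finitely many jumps on $[0,t]$, so stopping at the $k$th jump and letting $k\to\infty$ gives the identity $\E|\zeta_t^\rho|^2=1+2\E\int_0^t\beta(\zeta_s^\rho)\,ds$ without Fatou losses.
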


To prove \thref{prop:main} we start by defining the voter model and some important duality relationships in Section \ref{sec:voter}. We then use the MTP in Lemma \ref{lem:sizebias} to show that the size of the voter model cluster \emph{currently occupying} $\rho$, denoted $|\zeta^{(\rho)}|$, has the size-biased distribution of  $|\zeta_t^{\rho}|$.
A different duality relation at \eqref{eq:duality2} relates the size of the voter model cluster occupying $\rho$ to the total number of CRW particles coalesced with the one started at $\rho$. 
We use this in \thref{lem:clusbnd} to show that the expected number of other particles coalesced to a given particle is bounded by two times the integral of the vertex degrees along the particle's random walk path. This is accomplished with a coupling that gives priority to the path followed by the particle started from $\rho$ and ignores certain collisions. Using the stationarity and reversibility observed in Lemma \ref{lem:unistat}, we obtain the bound $\E | \zeta^{(p)}_t| \leq \tom{1+2t\mathbf{E}[\deg(\rho)]}$ for all $t \geq 0$. The size-biasing relationship observed in Lemma \ref{lem:sizebias} implies that $\E |\zeta^{(p)}|= \E[ \, | \zeta^{\rho}_t|^2 \, ]$, which gives the desired second moment bound.

\thref{thm:main} follows in an elementary way from duality and \thref{prop:main}, so we give the proof now.

\begin{proof}[Proof of \thref{thm:main}]
To compress our notation we let $E$ denote $E_\rho^G$, $P$ denote $P_G^\rho$, and $\zeta_t$ denote $\zeta_t^\rho$. 
 Let $c=1+2\mathbf{E}[\deg(\rho)]$. Fix $m> c$ and let $\epsilon = c/m$. Applying  Markov's inequality to the random variable $E |\zeta_t|^2 = \mathbb E [ \,|\zeta_t|^2 \mid (G,\rho)]$, we obtain that
	$$\P( E | \zeta_t|^2 \leq mt ) \geq 1 - \epsilon >0$$
for every $t\geq 1$. For integer $k\ge 1$ let $A_k = \{E|\zeta_k|^2 \leq mk\}$ so that $P(A_k) \geq 1- \epsilon$ for all $k$. By the reverse Fatou's lemma applied to $\1(A_k)$, $\limsup_k \P(A_k) \leq \P( \limsup_k A_k )$, and thus 
$$1 - \epsilon \leq \P(A_k \text{ occurs infinitely often}).$$ 
So, with probability at least $1- \epsilon$, $ E |\zeta_{t_i}|^2 \leq m t_i$ for some sequence of integers $t_i \uparrow \infty$. By \thref{lem:mtg}, we have that $E |\zeta_t| =1$. The Cauchy-Schwarz inequality then yields
	\begin{align}
	P( |\zeta_{t_i}| > 0 ) \geq \frac{(E |\zeta_{t_i}|)^2}{E|\zeta_{t_i}|^2} \geq \frac 1{mt_i}\label{eq:2nd}
	\end{align}
	 for each $t_i$. \tom{Let $i_0=0$ and for each $k\geq 1$ let $i_k$ be minimal such that $t_{i_k} \geq 2 t_{i_{k-1}}$. Let $s_k=t_{i_{k}}$.} Since $0$ is an absorbing state, $t\mapsto P(|\zeta_{t}| > 0)$ is non-increasing \tom{and we deduce that}
% Using this, the above estimate, and Lemma \ref{lem:integral} we find that
\begin{align}\int_0^\infty  P(|\zeta_{t}| > 0 ) dt \geq \ \frac 1 m \sum_{k=1}^\infty \frac{s_{k+1} - s_k}{s_{k+1}} \geq \frac{1}{m} \sum_{k=1}^\infty \frac{1}{2} =  \infty \label{eq:voter_survival}
	\end{align}
with probability at least $1-\epsilon$. Let $B_t$ be the event that in CRW there is a particle at $\rho$ at time $t$. By the duality relationship described in \eqref{eq:duality}, $P(B_t) = P(|\zeta_t|>0)$ and so
	$$\P\left(\int_0^{\infty} P(B_t)dt = \infty\right) \ge 1-\eps.$$
Since $\eps$ can be made small by choosing $m$ large, the above has $\P$-probability 1. Moreover, on a fixed graph $G$, it is known (see for example \cite{crw}) that $\int_0^{\infty}P(B_t)dt=\infty$ implies $\rho$ is recurrent (i.e., is occupied infinitely often as $t\to\infty$) almost surely. Combining these observations, it follows that $\rho$ is recurrent $\P$-a.s. Since any property that holds a.s.\ for the root of a unimodular graph holds for all vertices a.s. \tom{\cite[Lemma 2.3]{aldous_lyons}}, it follows that all vertices are recurrent $\P$-a.s. 
\end{proof}

We now give two corollaries of our theorem. 
% First off, there is some interest in the voter model on unimodular graphs \cite{tom}.
 The expression at \eqref{eq:voter_survival} is the expected survival time of the opinion started at $\rho$ in the voter model on $G$. From it we obtain the following corollary.

\begin{corollary}
\tom{Let $(G,\rho)$ be a unimodular random rooted graph with $\mathbf{E} [\deg(\rho)]<\infty$.}
	Then the \tom{quenched} expected survival time of the opinion started at $\rho$ is infinite \tom{a.s.}
\end{corollary}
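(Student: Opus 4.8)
The approach is to read this corollary off from the proof of \thref{thm:main} essentially for free. The quenched expected survival time of the opinion started at $\rho$ is $\Eu_\rho^G[\tau]$, where $\tau := \inf\{t \geq 0 : \zeta_t^\rho = \emptyset\}$ is the extinction time of the voter-model cluster seeded at $\rho$ (with $\tau = \infty$ if the cluster never dies out). The first step is the elementary tail-integral identity: since $\emptyset$ is absorbing for $t \mapsto \zeta_t^\rho$, we have $\{\tau > t\} = \{|\zeta_t^\rho| > 0\}$ for every $t \geq 0$, and so Tonelli's theorem yields
\begin{equation*}
\Eu_\rho^G[\tau] \;=\; \int_0^\infty \Pu_\rho^G(\tau > t)\, dt \;=\; \int_0^\infty \Pu_\rho^G\!\left(|\zeta_t^\rho| > 0\right) dt,
\end{equation*}
which is exactly the quantity appearing on the left-hand side of \eqref{eq:voter_survival}.

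The second step is to invoke the bound already established inside the proof of \thref{thm:main}. There we showed that, for every $m > c := 1 + 2\mathbf{E}[\deg(\rho)]$, the integral $\int_0^\infty \Pu_\rho^G(|\zeta_t^\rho| > 0)\, dt$ equals $+\infty$ with $\P$-probability at least $1 - c/m$; see \eqref{eq:voter_survival}. The event $\{\int_0^\infty \Pu_\rho^G(|\zeta_t^\rho| > 0)\, dt = \infty\}$ does not depend on $m$, so letting $m \to \infty$ forces it to have full $\P$-measure. Combining with the first step gives $\Eu_\rho^G[\tau] = \infty$ $\P$-almost surely, which is the assertion.

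I do not expect any genuine obstacle here: all of the probabilistic content is already contained in \thref{prop:main} and the argument for \thref{thm:main}. The only points requiring a line of justification are the measure-theoretic identity in the first step — in particular that the extinction time of $t\mapsto\zeta_t^\rho$ really does integrate the quenched survival probability, correctly handling the possibility $\tau = \infty$ — together with the measurability of the relevant event as a function of $(G,\rho)$, which is of the same type as the measurability tacitly used for the events $A_k$ in the proof of \thref{thm:main}.
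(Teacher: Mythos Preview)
Your proposal is correct and matches the paper's approach essentially exactly: the paper simply observes that the integral on the left of \eqref{eq:voter_survival} is the quenched expected survival time of the opinion at $\rho$, and since that integral was already shown in the proof of \thref{thm:main} to be infinite with probability at least $1-\epsilon$ for every $\epsilon>0$, the corollary follows. Your write-up merely makes explicit the tail-integral identity $\Eu_\rho^G[\tau]=\int_0^\infty \Pu_\rho^G(|\zeta_t^\rho|>0)\,dt$ and the $m\to\infty$ step that the paper leaves implicit.
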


\medskip

\tom{
As remarked previously, we can use compactness arguments to derive ``uniform'' versions of Theorem \ref{thm:main} that are already interesting in the context of finite graphs. 
We denote by $\sigma_t(v)$ the first time after $t$ that $v$ is visited by a particle in CRW.

\begin{corollary}
\label{cor:quantitative}
Let $\mu$ be a probability measure on $\mathbb N$ with finite mean. Then there exists a function $f_\mu(t,u):[0,\infty)\times[0,\infty)\to [0,1]$ with
\[ \lim_{u\to\infty} f_\mu(t,u) = 0 \qquad \forall t\geq 0\]
 such that 
 whenever $(G,\rho)$ is a unimodular random rooted graph such that the law of $\deg(\rho)$ is stochastically dominated by $\mu$, we have that
\[
\P\bigl( \sigma_t(\rho) \geq t+u\bigr) \leq f_\mu(t,u)
\]
for every $t,u \geq 0$.
\end{corollary}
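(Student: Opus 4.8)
The plan is to derive Corollary \ref{cor:quantitative} from Theorem \ref{thm:main} by a compactness argument on the space of unimodular random rooted graphs with degree law dominated by $\mu$. First I would fix $\mu$ and let $\mathcal{U}_\mu$ denote the set of (laws of) unimodular random rooted graphs $(G,\rho)$ for which $\deg(\rho)$ is stochastically dominated by $\mu$. The key point is that $\mathcal{U}_\mu$ is compact in the weak topology: the set of unimodular laws is closed, and the domination hypothesis gives tightness (a uniform integrability / Prokhorov-type bound on the local structure, since $\mu$ has finite mean controls expected degrees and hence, via unimodularity, the whole degree profile of balls). For each $(G,\rho)\in\mathcal{U}_\mu$ and each $t,u\geq 0$ define
\[
g(G,\rho; t,u) = \P\bigl(\sigma_t(\rho)\geq t+u\bigr),
\]
the annealed probability that $\rho$ is not visited in the time window $[t,t+u]$. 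Then set
\[
f_\mu(t,u) = \sup_{(G,\rho)\in\mathcal{U}_\mu} g(G,\rho;t,u).
\]
By construction the stated inequality $\P(\sigma_t(\rho)\geq t+u)\leq f_\mu(t,u)$ holds for every $(G,\rho)\in\mathcal{U}_\mu$, so all that remains is the limit $\lim_{u\to\infty} f_\mu(t,u)=0$ for each fixed $t$.

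To prove this limit I would argue by contradiction: suppose for some $t\geq 0$ there is $\delta>0$ and a sequence $u_n\to\infty$ with $f_\mu(t,u_n)\geq\delta$. Choose $(G_n,\rho_n)\in\mathcal{U}_\mu$ with $g(G_n,\rho_n;t,u_n)\geq\delta/2$; note $g(G,\rho;t,u)$ is non-increasing in $u$ (the event $\{\sigma_t(\rho)\geq t+u\}$ shrinks as $u$ grows). By compactness pass to a subsequential weak limit $(G_\infty,\rho_\infty)\in\mathcal{U}_\mu$. The crucial technical step is a continuity/lower-semicontinuity statement: for each fixed $t,u$, the map $(G,\rho)\mapsto g(G,\rho;t,u)$ is upper semicontinuous with respect to local weak convergence, because the event $\{\sigma_t(\rho)\geq t+u\}$ depends on the CRW restricted to a bounded time horizon, and such events can be approximated by events depending only on a finite neighborhood of the root together with the Poisson clocks there — the standard finite-speed-of-propagation heuristic for interacting particle systems with bounded jump rates. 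Using monotonicity in $u$ and this semicontinuity one gets, for every fixed $u$,
\[
g(G_\infty,\rho_\infty; t,u) \geq \limsup_{n\to\infty} g(G_n,\rho_n; t, u_n) \geq \delta/2,
\]
since for $n$ large $u_n\geq u$. Letting $u\to\infty$ gives $\P(\sigma_t(\rho_\infty)=\infty)\geq\delta/2>0$, i.e.\ $\rho_\infty$ is visited only finitely often with positive probability, contradicting Theorem \ref{thm:main} applied to $(G_\infty,\rho_\infty)\in\mathcal{U}_\mu$ (which has finite expected root degree since $\mathbf{E}[\deg(\rho_\infty)]\leq\int x\,d\mu(x)<\infty$).

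The main obstacle I anticipate is making the compactness and semicontinuity rigorous: one must (i) verify that stochastic domination of $\deg(\rho)$ by an integrable $\mu$ really yields tightness of the local weak topology on $\mathcal{U}_\mu$ — this uses unimodularity to propagate the degree bound to the degrees of neighbors, neighbors of neighbors, etc., so that balls of fixed radius have tight size — and (ii) set up the coupling that realizes CRW on a converging sequence of rooted graphs on a common probability space, so that the truncated event $\{\sigma_t(\rho)\geq t+u\}$ genuinely passes to the limit. For (ii) the clean way is to note that whether $\rho$ is visited during $[0,t+u]$ is, up to an event of arbitrarily small probability (controlled uniformly over $\mathcal{U}_\mu$ by the degree bound, since the number of jumps any fixed particle makes in time $t+u$ has a light tail depending only on $\mu$), determined by the graph structure and Poisson processes within a random but a.s.-finite neighborhood of $\rho$; this localizes the event and delivers the semicontinuity. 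Once these two ingredients are in place the contradiction argument above closes the proof.
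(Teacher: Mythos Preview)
Your overall strategy matches the paper's exactly: take $f_\mu(t,u)$ to be the supremum over the compact set $\mathcal{U}_\mu$, show the relevant functional is upper semicontinuous, pass to a subsequential limit, and contradict Theorem~\ref{thm:main}. For compactness of $\mathcal{U}_\mu$ the paper simply cites a known result rather than reproving tightness via unimodularity as you sketch in (i).

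The only substantive difference is in how upper semicontinuity is obtained. You propose a finite-speed-of-propagation/localization argument and rightly flag it as the main obstacle, since jump rates are \emph{not} bounded and your suggested fix (light tails on jump counts uniform over $\mathcal{U}_\mu$) would take real work. The paper bypasses all of this with a clean monotonicity trick: define $\sigma_t^{(r)}(\rho)$ to be the first visit to $\rho$ after time $t$ in the modified CRW where every particle is killed upon exiting the ball of radius $r$ about $\rho$. The map $\nu\mapsto\nu\bigl[P^G_\rho(\sigma_t^{(r)}(\rho)\geq t+u)\bigr]$ is then \emph{continuous} in the local topology, since it depends only on the $r$-ball. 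Because killing particles can only delay a visit to $\rho$, one has
\[
P^G_\rho\bigl(\sigma_t(\rho)\geq t+u\bigr)=\inf_{r\geq 1}P^G_\rho\bigl(\sigma_t^{(r)}(\rho)\geq t+u\bigr),
\]
an infimum of continuous functions, hence upper semicontinuous. This replaces your entire step (ii) and requires no uniform control on particle displacements.
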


\begin{proof}
Let $M$ be the set of laws of unimodular random rooted graphs with root degree stochastically dominated by $\mu$. Then $M$ is compact with respect to the weak topology \cite[Section 3.2.1]{CurienNotes}, and since $\mathcal{G}_\bullet$ is a Polish space when equipped with the local topology \cite[Theorem 2]{CurienNotes}, $M$ is sequentially compact also. 

Let $\sigma_t^{(r)}(\rho)$ be the first time after $t$ that $\rho$ is visited by a particle in the modified coalescing random walk in which particles are killed upon leaving the ball of radius $r$ around $\rho$. Then for each $t,u\geq 0$, the function 
\[\nu\mapsto \nu\left[ P^G_\rho\left(\sigma_t^{(r)}(\rho) \geq t+u\right)\right]\] is clearly continuous with respect to the weak topology on measures on $\mathcal{G}_\bullet$. Since $P^G_\rho(\sigma_t(\rho) \geq t+  u) = \inf_{r\geq 1} P^G_\rho(\sigma_t^{(r)}(\rho) \geq t+u)$ it follows that the function 
\[\nu\mapsto \nu\left[ P^G_\rho\left(\sigma_t(\rho) \geq t+u\right)\right]\] 
is an infimum of continuous functions and is therefore upper semi-continuous with respect to the weak topology on the space of probability measures on $\mathcal{G}_\bullet$. Thus, it obtains its maximum on $M$, and we denote this maximum by $f_\mu(t,u)$. Clearly $f_\mu(t,u)$ is decreasing in $u$. 

Suppose for contradiction that $\inf_{u\geq0} f_\mu(t,u)\geq \eps$ for some $\eps > 0$ and $t\in [0,\infty)$. Then for each $n\geq 1$ there exists $\nu_n\in M$ such that 
\[
\nu_n\left[ P_\rho^G\left(\sigma_t(\rho) \geq t+m \right)\right]  \geq \eps \text{ for all $1\leq m \leq n$.}
\]
By compactness of $M$, the measures $\nu_n$ have a subsequential limit $\nu \in M$, and by upper semi-continuity we have that
\[
\nu\left[ P_\rho^G\left(\sigma_t(\rho) \geq t+m \right)\right]  \geq \eps \text{ for all $m\geq 1$.}
\]
This contradicts Theorem \ref{thm:main}.
\end{proof}
}

\section{Establishing \thref{prop:main}}

We follow the outline described in the introduction just proceeding the statement of \thref{prop:main}.

\subsection{Voter model duality} \label{sec:voter}

We describe the graphical representation that allows us to construct and analyze coalescing random walk as well as a related \emph{voter model}. 
We do this on a fixed graph; to obtain a reasonable joint measure $\P$ of the random graph and CRW, it suffices to define the elements of the graphical representation (i.e., the processes $U_{(v,w)}$ given below) recursively over finite rooted graphs.\\

Let $G=(V,E)$ be an undirected, locally finite graph on which the continuous time edge-driven walk is non-explosive, i.e.\ makes at most finitely many jumps in any finite time interval a.s. This property always holds a.s.\ on unimodular random rooted graphs with finite expected degree \cite[Corollary 4.4]{aldous_lyons}, and can therefore be safely assumed throughout our analysis.
% For a fixed undirected, locally finite graph $(V,E)$, 
Double the edge set \tom{of $G$} to form the \tom{set of} directed edges $F = \{(v,w)\colon v,w \in E\}$. Define a family $\{U_{(v,w)}\colon(v,w) \in F\}$ of independent unit intensity Poisson point processes. Then a particle dropped at a spacetime location $(v,s)$ follows a unique path $\Theta_t(v,s)\colon[s,\infty) \to V$ defined by updating to $\Theta_t(v,s) = u$, any time that $\Theta_{t^-}(v,s)=w$ and $t \in U_{(w,u)}$; local finiteness \tom{and non-explosivity} ensures that the jump times are \tom{a discrete subset of $[0,\infty)$, and hence that $\Theta_t(v,s)$ is well-defined for every $v\in V$, $s \in [0,\infty)$ and $t\geq s$ a.s.} Coalescing random walk, labelled by $\xi_t^v$ for the location of the particle that began at $v$, is defined by
$$\xi_t^v = \Theta_t(v,0).$$
We let $\xi_t = \{ w \in V \colon \exists v \text{ such that } \xi_t^v = w\}$ be the collection of all occupied sites at time $t$.

The related voter model $\zeta_t$, initialized at any time $T>0$, is defined on the time interval $[0,T]$ by letting
$$\zeta_t^v = \{w:\Theta_T(w,T-t)=v\}.$$
In particular, $\zeta_0^v = \{v\}$, and for $t>0$, $\zeta_t^v$ and $\zeta_t^w$ are disjoint if $w \ne v$. The non-explosivity assumption ensures that $V = \bigcup_v \zeta_t^v$ for $t>0$. Therefore $\{\zeta_t^v:v \in V\}$ is a partition of $V$, so we can think of the sets $\zeta_t^v$ as clusters vying for control of the territory $V$, with $\zeta_t^v$ being the cluster that started as $\{v\}$. Labelling clusters by their starting vertex, we could also define the voter model by $\zeta_t(v) = \Theta_T(v,T-t)$, with $\zeta_t(v)$ equal to the label of the cluster that contains $v$ at time $t$.

From the graphical representation, looking backwards in time we see \tom{that} the voter model is Markov with the following transition rule:
for each $(v,w) \in F$, at rate $1$, $\zeta_t(v) = \zeta_{t^-}(w)$, or equivalently, the cluster containing $w$ swallows $v$. There are various connections between $\xi_t$ and $\zeta_t$. The most basic is that
\begin{align}
\zeta_T^v = \{w\colon\xi_T^w = v\}. \label{eq:duality} 
\end{align}
If we define $\smash{\zeta_t^{(v)} = \zeta_t^{\zeta_t(v)}}$ to be the cluster containing $v$, we note also that
\begin{align}
\smash{\zeta_T^{(v)} = \{w\colon\xi_T^w = \xi_T^v\}},\label{eq:duality2} 	
\end{align}
and that $\smash{v \in \zeta_T^{\xi_T^v}}$. If we think of the voter model as being constructed separately from CRW, these equalities are in distribution and hold for any fixed value of $T\ge 0$.

%\subsection{Establishing \thref{prop:main}}
%The basic strategy is the same as in \cite{crw, griffeath}. Namely, we wish to show the occupation time of a certain vertex, in this case the root, has infinite expectation.
%More precisely, let $p_t$ be the $\P$-probability that there is a particle at $\rho$ at time $t$. We can relate this to the total occupation time of $\rho$, $\tau$, from \thref{prop:main}.

%\begin{proposition}\thlabel{prop:pt}
%	$\mathbb E [\tau] = \int_0^\infty p_t dt.$
%\end{proposition}
%
%\begin{proof}
%Recall that $\xi_t$ is the set of all occupied sites in CRW. This lets us write $\tau = \int_0^\infty \mathbf 1\{\rho \in \xi_t\} dt.$ Take expectation and apply Fubini's theorem to obtain the claimed formula.
%\end{proof}

\subsection{Proof of \thref{prop:main}}

We start by using the mass-transport principle to deduce a size-biasing property for the voter model.

\begin{lemma}\label{lem:sizebias}
Let $(G,\rho)$ be a unimodular random graph \tom{on which the continuous time edge-driven random walk is non-explosive}. Then $|\zeta_t^{(\rho)}|$ has the size-biased distribution of $|\zeta_t^{\rho}|$. That is, for \tom{every $t\geq 0$ and every integer $n\geq 0$,}
$$\P(\,|\zeta_t^{(\rho)}| = n\,) = n \P(\,|\zeta_t^{\rho}|=n\,).$$
\end{lemma}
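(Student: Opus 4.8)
The plan is to apply the mass-transport principle \eqref{eq:mtp} with a carefully chosen mass-transport function that records, for an ordered pair of vertices $(x,y)$, whether $y$ lies in the voter cluster currently occupying $x$ at time $t$. Concretely, fix $t\geq 0$ and define $F(G,x,y) = \1[\,y \in \zeta_t^{(x)}\,]$, where the voter model is constructed from the graphical representation (say, initialized at time $T=t$, which is harmless since the distributional identities hold for every fixed $T$). Since the voter clusters $\{\zeta_t^v : v\in V\}$ partition $V$, for each fixed $x$ the sum $\sum_{y\in V} F(G,x,y)$ counts exactly the vertices in the single cluster containing $x$, i.e.\ $\sum_{y} F(G,x,y) = |\zeta_t^{(x)}|$. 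On the other side, $\sum_{x\in V} F(G,x,y) = \sum_x \1[y\in\zeta_t^{(x)}]$ counts the vertices $x$ whose cluster contains $y$; but there is exactly one cluster containing $y$, namely $\zeta_t^{(y)} = \zeta_t^{\zeta_t(y)}$, and the set of $x$ with $\zeta_t^{(x)} = \zeta_t^{(y)}$ is precisely that cluster itself, so $\sum_x F(G,x,y) = |\zeta_t^{(y)}| = |\zeta_t^{\zeta_t(y)}|$. Wait — more usefully I want the right-hand side to produce $|\zeta_t^\rho|$, so I should restrict the indicator.

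The cleaner choice is $F(G,x,y) = \1[\,y \in \zeta_t^{(x)}\,]\cdot \1[\,\zeta_t(x) = y\,]$ — no; let me instead use $F(G,x,y) = \1[\,y\in \zeta_t^x\,]$ with $\zeta_t^x$ the cluster \emph{labelled} by $x$ (not the cluster containing $x$). Then $\sum_{y} F(G,x,y) = |\zeta_t^x|$ always, so the left side of \eqref{eq:mtp} is $\Eo|\zeta_t^\rho|$ — not what we want either. The right size-biasing comes from the asymmetric choice $F(G,x,y) = \1[\,x \in \zeta_t^{y}\,]$ read in the MTP: then $\sum_{y\in V} F(G,\rho,y) = \sum_y \1[\rho\in\zeta_t^y] = 1$ (since the clusters partition $V$ and $\rho$ lies in exactly one of them), while $\sum_{x\in V} F(G,x,\rho) = \sum_x \1[x\in\zeta_t^\rho] = |\zeta_t^\rho|$. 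That gives $1 = \Eo|\zeta_t^\rho|$, recovering the martingale fact, not the lemma. To get the squared/size-biased statement I need a function that is quadratic in the cluster, so the right move is $F(G,x,y) = \1[\,x\in\zeta_t^{(y)}\,]$, equivalently $\1[\zeta_t(x)=\zeta_t(y)]$, which is symmetric. Then both sides equal $\Eo|\zeta_t^{(\rho)}|$ and nothing is gained.

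The resolution: condition on the cluster labels. Define $F(G,x,y)=\1[\,\zeta_t(x)=\zeta_t(y)=x\,]$ — i.e.\ mass flows from $x$ to $y$ exactly when $x$ is the \emph{label} (starting vertex) of the common cluster. Then $\sum_y F(G,\rho,y) = \1[\zeta_t(\rho)=\rho]\,|\zeta_t^\rho|$, so the left side of \eqref{eq:mtp} is $\Eo\!\big[\1[\zeta_t(\rho)=\rho]\,|\zeta_t^\rho|\big]$, while $\sum_x F(G,x,\rho) = \sum_x \1[\zeta_t(\rho)=\zeta_t(x)=x] = \1[\zeta_t(\zeta_t(\rho))=\zeta_t(\rho)]$, which is identically $1$; hence $\Eo\!\big[\1[\zeta_t(\rho)=\rho]\,|\zeta_t^\rho|\big]=1$. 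Combined with the observation that, conditionally on $|\zeta_t^\rho|=n$, the label $\zeta_t^{(\rho)}$ of the cluster containing $\rho$ is uniform among the $n$ vertices of that cluster (because unimodularity makes the root "uniform" within its cluster — this is exactly the content one extracts from the MTP, and is the step I expect to be the real obstacle to state rigorously), one gets $\P(|\zeta_t^{(\rho)}|=n) = n\,\P(|\zeta_t^\rho|=n)$ after checking normalization via the identity just derived. In the writeup I would: (1) set up the graphical construction and note the partition property; (2) apply the MTP to an indicator-type $F$ to establish $\Eo[|\zeta_t^\rho|\cdot\1(\text{$\rho$ is cluster label})]=1$ and, more generally, that for any bounded $g$, $\Eo[g(|\zeta_t^{(\rho)}|)] = \Eo[|\zeta_t^\rho|\,g(|\zeta_t^\rho|)]$ by taking $F(G,x,y)=\1[\zeta_t(x)=\zeta_t(y)]\,g(|\zeta_t^x|)/|\zeta_t^x|$ — wait, that over-counts; the correct one is $F(G,x,y) = \1[\zeta_t(y)=x]\,g(|\zeta_t^x|)$, whose row sum over $y$ is $\1[\zeta_t^{(x)}\ni x \text{ as label?}]$... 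I would pin down the precise $F$ so that $\sum_y F(G,\rho,y) = g(|\zeta_t^{(\rho)}|)$ and $\sum_x F(G,x,\rho) = |\zeta_t^\rho|\,g(|\zeta_t^\rho|)$, namely $F(G,x,y)=\1[y\in\zeta_t^{(x)}]\cdot h(x,\zeta_t^{(x)})$ type expressions, and verify both sides; (3) take $g=\1_{\{n\}}$ to conclude. The main obstacle is choosing $F$ so the two marginals come out as $g(|\zeta_t^{(\rho)}|)$ and $|\zeta_t^\rho|g(|\zeta_t^\rho|)$ respectively — once that bookkeeping is done correctly the lemma is immediate, and setting $g\equiv 1$ simultaneously recovers Lemma \ref{lem:mtg}'s normalization $\E|\zeta_t^\rho|=1$.
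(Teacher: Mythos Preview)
Your approach---apply the MTP to a transport function built from indicators of voter-cluster membership---is exactly the paper's, and in fact one of the functions you wrote down and then abandoned actually works: $F(G,x,y)=\1[\zeta_t(y)=x]\,g(|\zeta_t^x|)$. The verification you stopped short of is immediate: since $\{y:\zeta_t(y)=\rho\}=\zeta_t^\rho$ we get $\sum_y F(G,\rho,y)=|\zeta_t^\rho|\,g(|\zeta_t^\rho|)$, while exactly one $x$ satisfies $\zeta_t(\rho)=x$, namely $x=\zeta_t(\rho)$, so $\sum_x F(G,x,\rho)=g(|\zeta_t^{\zeta_t(\rho)}|)=g(|\zeta_t^{(\rho)}|)$. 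Taking $g=\1_{\{n\}}$ and applying \eqref{eq:mtp} gives the lemma directly. The paper uses the transpose of this function, $F(G,\rho,x)=\Eu_\rho^G[\1(|\zeta_t^x|=n,\ \rho\in\zeta_t^x)]$; note the quenched expectation, which you should also insert, since $F$ in \eqref{eq:mtp} must be a function of the birooted graph alone.

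The one place your sketch risks a genuine error is the detour through ``conditionally on $|\zeta_t^\rho|=n$, the label is uniform among the $n$ vertices of the cluster.'' That statement is not obviously well-posed (on the event $|\zeta_t^\rho|=n$ the label \emph{is} $\rho$), and the intended reformulation---that the root is uniform in its cluster---is precisely the size-biasing you are trying to prove, so invoking it would be circular. Drop that paragraph; the single MTP application above is the whole proof.
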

\begin{proof}
The duality relation at \eqref{eq:duality2} ensures that for any $(G,\rho)$ we have $\rho \in \zeta_t^{\xi_t^{\rho}}$ when the voter model is initialized at time $t$. So, for $n\geq 1$ we have the disjoint union
\begin{equation}\label{eq3}
\{|\zeta_t^{(\rho)}| = n\} = \bigcup_{x \in V(G)} \{|\zeta_t^x|=n,\,\, \rho \in \zeta_t^x\}.
\end{equation}
Define $F\colon \G_{\cir\cir}\to \R_+$ by
$$F(G,\rho,x) = \Eu_G^{\rho}[\1(|\zeta_t^x|=n,\,\,\rho \in \zeta_t^x)].$$
Using the MTP as formulated at \eqref{eq:mtp} and \eqref{eq3},
\beq\label{eq4}
\P(|\zeta_t^{(\rho)}| = n) = \Eo\left[ \sum_{x \in V(G)}F(G,\rho,x) \right ] = \Eo\left[ \sum_{x \in V(G)}F(G,x,\rho) \right ] .\\
\eeq
On the event $|\zeta_t^{\rho}|=n$, $\sum_{x \in V(G)}\1(x \in \zeta_t^{\rho})=n$. Writing the indicator as a product and using Fubini's theorem,
\begin{eqnarray*}
\sum_{x \in V(G)} F(G,x,\rho) &=& \sum_{x \in V(G)} \Eu_\rho^G[\1(|\zeta_t^{\rho}|=n,\,\,x \in \zeta_t^{\rho})] \\
&=& \Eu_\rho^G\left[\sum_{x \in V(G)} \1(|\zeta_t^{\rho}|=n)\1(x \in \zeta_t^{\rho})\right] \\
&=& n\Eu_\rho^G \left[\1(|\zeta_t^{\rho}|=n)\right].
\end{eqnarray*}
Combining with \eqref{eq4},
$$\P(|\zeta_t^{(\rho)}| = n) = \Eo[n\Eu_\rho^G[(\1(|\zeta_t^{\rho}|=n)]] = n\P(|\zeta_t^{\rho}| =n),$$
as desired.
\end{proof}

On bounded degree graphs $|\zeta_t^\rho|$ is a martingale. Indeed, it transitions as a nearest-neighbor simple random walk whose jump rate is equal to 2 times the number of (undirected) boundary edges of the cluster $\zeta_t^\rho$. A concern on unbounded degree graphs is that a lack of integrability might cause the size of the cluster to instead be a strict local martingale. The next lemma shows this is not the case in our setting.

\begin{lemma}\thlabel{lem:mtg}
% Fix $(\rho,G)$. 
Let $(G,\rho)$ be a unimodular random rooted graph on which the continuous time edge-driven random walk is non-explosive. Then
 $E_\rho^G |\zeta_t^\rho| = 1$ for all $t \geq 0$ a.s. 
\end{lemma}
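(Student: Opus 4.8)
The plan is to show that the quenched expectation $E_\rho^G|\zeta_t^\rho|$ is constant in $t$ and equal to its value at $t=0$, which is $1$. The clean way to do this is via the size-biasing identity already established in Lemma \ref{lem:sizebias}, combined with the fact that the total mass $\sum_v |\zeta_t^v| = |V|$... but $V$ may be infinite, so instead I would argue as follows. For each $v$, the process $t \mapsto |\zeta_t^v|$ is a nonnegative jump process that increases by $1$ when the cluster swallows a neighbouring vertex (at total rate equal to the number of directed edges from $V\setminus \zeta_t^v$ into $\zeta_t^v$) and decreases by $1$ when a vertex of $\zeta_t^v$ is swallowed by another cluster (at total rate equal to the number of directed edges from $\zeta_t^v$ to $V\setminus\zeta_t^v$). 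Since for an undirected graph the number of directed boundary edges pointing in equals the number pointing out, the drift of $|\zeta_t^v|$ is zero, so it is a local martingale; the content of the lemma is that it is a genuine martingale, i.e.\ that no mass escapes to infinity.

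First I would set up a localization/stopping-time argument on the fixed graph $G$. Let $T_k$ be the first time the cluster $\zeta_t^\rho$ reaches size $k$, or the first time it is absorbed at $0$ — actually $\zeta_t^\rho$ contains $\rho$ at time $0$ and can shrink to the empty set only if $\rho$ gets swallowed; more useful is to stop when $|\zeta_t^\rho|$ first exceeds $k$. The stopped process $|\zeta_{t\wedge T_k}^\rho|$ is a bounded local martingale hence a true martingale, so $E_\rho^G|\zeta_{t\wedge T_k}^\rho| = 1$ for all $t$ and all $k$. By monotone convergence as $k\to\infty$ one gets $E_\rho^G|\zeta_t^\rho| \geq E_\rho^G\bigl[|\zeta_t^\rho|\,\1(T_k > t)\bigr] \to$ something, but to get the reverse inequality (that there is no loss of mass) I would instead use a uniform integrability argument. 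This is where unimodularity enters and where I would invoke Lemma \ref{lem:sizebias}: integrating the identity $\P(|\zeta_t^{(\rho)}|=n) = n\,\P(|\zeta_t^\rho|=n)$ over $n$ gives $\E|\zeta_t^\rho| = \sum_n n\,\P(|\zeta_t^\rho|=n)$ — wait, that is circular. Instead, the point of Lemma \ref{lem:sizebias} here is: $\P(|\zeta_t^{(\rho)}| < \infty) = \sum_{n\geq 1} n\,\P(|\zeta_t^\rho| = n)$, and separately $|\zeta_t^{(\rho)}|$ is a.s.\ finite because $\{\zeta_t^v\}_v$ partitions $V$ and each part is finite by non-explosivity (the cluster of $v$ at time $t$ is contained in the set of vertices from which the walk started at time $0$ can reach $v$ by time $t$, a finite set a.s.). Hence $\sum_{n\geq 1} n\,\P(|\zeta_t^\rho|=n) = 1$, which is exactly $\E|\zeta_t^\rho| = 1$ in annealed form; then I would upgrade to the quenched a.s.\ statement using that $E_\rho^G|\zeta_t^\rho| \geq 1$ holds a.s.\ (Fatou applied to the stopped martingales: $E_\rho^G|\zeta_t^\rho| \geq \liminf_k E_\rho^G|\zeta_{t\wedge T_k}^\rho| = 1$), and since the annealed expectation of the nonnegative quantity $E_\rho^G|\zeta_t^\rho| - 1$ is zero, it must vanish a.s.

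The main obstacle is making precise the claim that $|\zeta_t^{(\rho)}|$ is a.s.\ finite — i.e.\ that the cluster currently occupying $\rho$ does not itself have infinite size. The subtle point is that $|\zeta_t^{(\rho)}|$ has the size-biased law, which could a priori have infinite total mass (this is exactly the phenomenon of $|\zeta_t^\rho|$ being a strict local martingale that the lemma rules out), so I cannot simply assert finiteness; I must derive it from the graphical representation. The correct argument: $\zeta_t^{(\rho)} = \{w : \Theta_t(w, 0) = \xi_t^\rho\}$ by \eqref{eq:duality2}, and by non-explosivity of the edge-driven walk on $(G,\rho)$ (guaranteed under $\Eo[\deg\rho]<\infty$ by \cite[Corollary 4.4]{aldous_lyons}, and assumed in the hypothesis), only finitely many particles can have reached any given vertex by time $t$; more carefully, the set of starting vertices $w$ whose walk occupies $\xi_t^\rho$ at time $t$ is finite a.s.\ because the union $\bigcup_v \zeta_t^v = V$ is a partition into a.s.-finite pieces. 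Establishing this partition-into-finite-pieces fact rigorously from non-explosivity — essentially that for fixed $t$, a.s.\ every vertex receives only finitely many coalescing walkers — is the technical heart of the argument; once it is in hand, summing Lemma \ref{lem:sizebias} over $n$ and matching with the Fatou lower bound finishes the proof.
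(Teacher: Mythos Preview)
Your approach is the paper's: deduce the annealed identity $\E|\zeta_t^\rho|=1$ from Lemma~\ref{lem:sizebias}, deduce a quenched inequality from the local-martingale property via Fatou, and combine. Two points deserve correction.

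First, your Fatou step is reversed. From $E_\rho^G|\zeta_{t\wedge T_k}^\rho|=1$ and $|\zeta_{t\wedge T_k}^\rho|\to|\zeta_t^\rho|$ a.s., Fatou's lemma gives
\[
E_\rho^G|\zeta_t^\rho|\;\le\;\liminf_k E_\rho^G|\zeta_{t\wedge T_k}^\rho|\;=\;1,
\]
not $\ge$. This is exactly the paper's formulation: a nonnegative local martingale is a supermartingale. The rest of your argument survives unchanged once you flip the sign---the nonnegative quantity with zero annealed mean is $1-E_\rho^G|\zeta_t^\rho|$, not $E_\rho^G|\zeta_t^\rho|-1$.

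Second, the ``main obstacle'' you flag---that $|\zeta_t^{(\rho)}|<\infty$ a.s.---is real (the paper does gloss over it), but your proposed route through non-explosivity alone is not the right one: non-explosivity controls forward trajectories, not the backward cluster $\{w:\xi_t^w=\xi_t^\rho\}$, and on a general locally finite graph the latter need not be finite. The clean fix uses unimodularity directly: transport mass $1$ from each $w$ to the unique $v$ with $w\in\zeta_t^v$; mass out is $1$, mass in is $|\zeta_t^\rho|$, so the MTP gives $\E|\zeta_t^\rho|=1$ immediately---bypassing the finiteness issue rather than confronting it. This is morally what summing Lemma~\ref{lem:sizebias} over $n$ does, but without needing to know in advance that the size-biased law is proper.
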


\begin{proof}
It follows from Lemma \ref{lem:sizebias} that $\E |\zeta_t^ \rho| = \sum_{n=0}^\infty \P( |\zeta_t^{(\rho)}| = n)  =1.$ Thus, it suffices to show that\tom{, conditional on $(G,\rho)$, the process}  $| \zeta^\rho_t |$ is a supermartingale \tom{a.s.}, since then we have that $E_\rho^G |\zeta^\rho_t | \leq 1$ a.s., and since this random variable must integrate to $1$ the claim follows. 
Every non-negative local martingale is a supermartingale by Fatou's lemma, and so it suffices to check that $|\zeta^\rho_t|$ is a local martingale. This follows easily after noting that the jump chain corresponding to the process $|\zeta_t^\rho|$ is a symmetric simple random walk absorbed at $0$.
\end{proof}

%\noindent The following result uses duality with the voter model and the fact that $|\zeta_t^{(\rho)}|$ has the size-biased distribution of $|\zeta_t^\rho|$ to connect the probability the root is occupied in CRW to the expected size of the voter model cluster occupying $\rho$.
%\begin{lemma}\label{lem:rwr}
%Let $(G,\rho)$ be a unimodular random graph. Then,
%$$p_t = \P(\zeta_t^\rho\neq \emptyset) \geq (\E[|\zeta_t^{(\rho)}|])^{-1}.$$
%\end{lemma}
%\begin{proof}
%The equality follows from the duality relation at \eqref{eq:duality}, since $x$ is occupied by a particle at time $T$ if and only if $\zeta_T^x \ne \emptyset$.
%The inequality follows from Lemma \ref{lem:sizebias}, using Jensen's inequality in the final step:
%\begin{align*}
%\P(\zeta_t^\rho \neq \emptyset) &= \sum_{n>0}\P(|\zeta_t^\rho| = n) = \sum_{n>0}n^{-1}\P(|\zeta_t^{(\rho)}|=n) = \E[|\zeta_t^{(\rho)}|^{-1}] \geq (\E[|\zeta_t^{(\rho)}|])^{-1}.
%\end{align*}
%\end{proof}
%\noindent Combining Lemma \ref{lem:rwr} with \thref{prop:pt} we have 
%\beq\label{eq:divvot}
%\mathbf E[\tau] = \int_0^\infty p_t dt \geq \int_0^{\infty}(\E[|\zeta_t^{(\rho)}|])^{-1}dt.
%\eeq

Let $(X_t)_{t \geq 0}$ denote continuous-time \emph{edge-driven} random walk, that is, $X_t$ moves along each edge at rate one. We will use the following property of unimodular random graphs.

\begin{lemma}\label{lem:unistat}
Let $(G,\rho)$ be a unimodular random \tom{rooted} graph with $\mathbf E[\deg(\rho)]<\infty$ and let $(X_t)_{t \geq 0}$ be a continuous-time edge-driven random walk with $X_0=\rho$. Then, the measure of $(G,\rho)$ is stationary and reversible for the random walk $(G,X_t)_{t \geq 0}$ on $\G_{\cir}$.
\end{lemma}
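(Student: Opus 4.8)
The plan is to reduce the claim to the well-known fact that a unimodular random rooted graph, when reweighted by root degree, becomes stationary and reversible for the \emph{discrete-time} simple random walk (this is essentially \cite[Theorem 4.1]{aldous_lyons}), and then transfer this to the continuous-time edge-driven walk. The first observation is that $(G,\rho)$ with $\mathbf E[\deg(\rho)]<\infty$ is already assumed to satisfy the mass-transport principle, so for the positive function $F(G,x,y)=\1[(x,y)\in E]\,g(G,x,y)/\deg(x)$ one gets, for every Borel $g\colon\G_{\cir\cir}\to\R_+$,
\begin{equation*}
\Eo\!\left[\frac{1}{\deg(\rho)}\sum_{x\sim\rho} g(G,\rho,x)\right]=\Eo\!\left[\frac{1}{\deg(\rho)}\sum_{x\sim\rho} g(G,x,\rho)\right],
\end{equation*}
which is exactly the statement that the discrete-time walk from $\rho$ is reversible with respect to the law of $(G,\rho)$ (stationarity follows by taking $g$ depending only on its first argument). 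Note that here no degree-biasing is needed because the edge-driven walk has the \emph{uniform} step distribution on the neighbours of the current vertex, not the degree-biased one, and it is precisely this walk that the statement concerns.

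Next I would pass from discrete to continuous time. The continuous-time edge-driven walk $(X_t)_{t\ge0}$ can be realised as the discrete jump chain $(Y_n)_{n\ge0}$ — which is exactly the discrete-time simple random walk handled above — run at the jump times of an inhomogeneous Poisson-type clock whose rate when at a vertex $v$ equals $\deg(v)$. The key point is that the holding-time mechanism depends on the environment only through the rooted isomorphism class of $(G,Y_n)$, so the pair ``environment seen from the particle together with its clock'' is a Markov process on an augmented state space, and the marginal process $(G,X_t)_{t\ge0}$ on $\G_\cir$ is Markov. Stationarity of the law of $(G,\rho)$ for $(G,X_t)$ then follows from stationarity for the jump chain together with the fact that, at a stationarity instant, the elapsed holding time has the right (memoryless) conditional distribution; more robustly, one checks directly that the generator of $(G,X_t)$, namely $(\mathcal L h)(G,v)=\sum_{w\sim v}\bigl(h(G,w)-h(G,v)\bigr)$, is skew-adjoint-plus-negative-semidefinite, i.e.\ $\Eo[(\mathcal L h_1)(G,\rho)\,h_2(G,\rho)]=\Eo[h_1(G,\rho)\,(\mathcal L h_2)(G,\rho)]$ for bounded $h_1,h_2$, which is just the displayed reversibility identity above summed appropriately; reversibility of the semigroup $e^{t\mathcal L}$ then follows, giving both stationarity and reversibility of the process. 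One must also invoke \cite[Corollary 4.4]{aldous_lyons} (non-explosivity, already assumed in force) to ensure $(G,X_t)$ is genuinely defined for all $t$ and the semigroup is conservative.

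The main obstacle — and the only place integrability of $\deg(\rho)$ really enters — is justifying the manipulations of infinite sums and the self-adjointness computation without degree-biasing blowing things up: one needs $\Eo[\deg(\rho)]<\infty$ precisely so that $\Eo\bigl[\sum_{x\sim\rho}|g(G,\rho,x)|\bigr]\le \|g\|_\infty\,\Eo[\deg(\rho)]<\infty$, making every application of Fubini/MTP and the generator identity legitimate for bounded test functions. I would therefore structure the write-up as: (i) derive the reversibility identity for the jump chain from the MTP, noting finiteness of the relevant sums; (ii) observe the jump chain is the embedded discrete-time walk of $(G,X_t)$ with environment-measurable holding rates; (iii) conclude stationarity and reversibility of $(G,X_t)$ either by the generator computation or by a direct time-reversal argument on the Poisson graphical representation already set up in Section~\ref{sec:voter}. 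Step (iii) is routine once (i) is in hand; the whole lemma is essentially a bookkeeping exercise repackaging \cite[Theorem 4.1]{aldous_lyons} for the continuous-time, uniform-step walk.
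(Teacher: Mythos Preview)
The paper's own proof is a one-line citation to \cite[Corollaries 4.3 and 4.4]{aldous_lyons}, so you are supplying details the authors chose to omit. Your generator route in (iii) is correct and is essentially what those corollaries encode: applying the MTP to $F(G,\rho,w)=\1[w\sim\rho]\,h_1(G,w)\,h_2(G,\rho)$ (no $1/\deg$ factor) yields $\Eo[(\mathcal L h_1)(G,\rho)h_2(G,\rho)]=\Eo[h_1(G,\rho)(\mathcal L h_2)(G,\rho)]$ directly, with $\Eo[\deg(\rho)]<\infty$ making the sums finite. From there, self-adjointness of the generator gives stationarity and reversibility of the semigroup, and non-explosivity makes the semigroup conservative.

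There is, however, a genuine error in your step (i). With $F(G,x,y)=\1[(x,y)\in E]\,g(G,x,y)/\deg(x)$ the MTP gives
\[
\Eo\!\left[\frac{1}{\deg(\rho)}\sum_{x\sim\rho} g(G,\rho,x)\right]
=\Eo\!\left[\sum_{x\sim\rho} \frac{g(G,x,\rho)}{\deg(x)}\right],
\]
not the identity you displayed with $1/\deg(\rho)$ on the right. Consequently the discrete-time simple random walk is \emph{not} reversible for the unimodular law of $(G,\rho)$; it is reversible for the \emph{degree-biased} law, which is precisely the content of \cite[Theorem 4.1]{aldous_lyons}. Your explanatory remark that ``no degree-biasing is needed because the edge-driven walk has the uniform step distribution'' is therefore the wrong diagnosis: the unimodular measure is stationary for the continuous-time edge-driven walk because the degree-dependent holding rates exactly cancel the degree bias of the jump chain, not because of anything about the step distribution. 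If you want to keep the jump-chain route, you must pass through the degree-biased measure and then undo the bias via the holding rates; otherwise, drop (i)--(ii) and run the generator computation from scratch, which stands on its own.
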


\begin{proof}
This follows from \cite[Corollaries 4.3 and 4.4]{aldous_lyons}.
\end{proof}

The next result allows us to estimate the size of $\zeta_t^{(\rho)}$ in terms of the average degree of $\xi_t^{\rho}$ over time.

\begin{proposition}\thlabel{lem:clusbnd}
Let $(G,\rho)$ be a fixed (nonrandom) element of $\G_{\cir}$ \tom{on which the continuous time edge-driven random walk is non-explosive}, and let $(X_t)_{t \geq 0}$ be a \tom{continuous time} edge-driven random walk on $(G,\rho)$ with $X_0=\rho$. Then
\begin{align}\Eu_\rho^G[ \ |\zeta_t^{(\rho)}| \ ] \leq 1 + 2\int_0^t \Eu_\rho^G[\,\deg(X_s)\,]ds.	\label{eq:prop}
\end{align}

\end{proposition}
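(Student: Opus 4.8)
The plan is to bound the expected cluster size $\Eu_\rho^G[|\zeta_t^{(\rho)}|]$ by tracking the number of CRW particles coalesced with the one started at $\rho$, using the duality relation \eqref{eq:duality2}, which tells us that $\zeta_t^{(\rho)} = \{w : \xi_t^w = \xi_t^\rho\}$ when the voter model is initialized at time $t$. The key idea is to construct CRW from the graphical representation in a way that gives \emph{priority} to the trajectory of the particle started at $\rho$: I would let that particle follow $\Theta_t(\rho,0)$ exactly, and then build a coupling in which I only count a contribution to $|\zeta_t^{(\rho)}|$ when some other particle collides with the distinguished path. Concretely, whenever the distinguished particle sits at a vertex $v = X_s$ at time $s$, each of the $\deg(v)$ directed edges pointing \emph{into} $v$ rings at rate one, and an incoming ring from a neighbor $u$ can bring along whatever cluster currently occupies $u$; but to get an upper bound it suffices to count, for each such ring, at most one new particle (and to ignore collisions that would in fact merge larger clusters or double-count). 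This over-counts, so the expected number of particles ever coalesced into the distinguished particle's cluster by time $t$ is at most $1$ (for $\rho$ itself) plus the expected number of incoming rings along the path, which is $\int_0^t \deg(X_s)\,ds$ in rate.

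The factor of $2$ rather than $1$ in \eqref{eq:prop} should come from the fact that the graphical representation uses the doubled directed edge set $F$: at a vertex $v$ there are $\deg(v)$ directed edges \emph{out} of $v$ driving the distinguished walk's own motion (these move the distinguished particle but do not by themselves add particles) and $\deg(v)$ directed edges \emph{into} $v$ (these are the ones that can swallow neighbors into $v$'s cluster). A careful accounting — the distinguished particle's cluster can grow either because a neighbor's cluster gets pulled into $v$, or because the distinguished particle jumps onto a vertex already controlled by another cluster — gives at most two units of expected growth per unit of $\int \deg$, hence the bound $1 + 2\int_0^t \Eu_\rho^G[\deg(X_s)]\,ds$. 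I would make this precise by defining an auxiliary counting process $N_t$ that dominates $|\zeta_t^{(\rho)}|$, is driven only by the Poisson processes $U_{(v,w)}$ with $v$ or $w$ equal to the current location $X_s$ of the distinguished walk, and whose compensator is exactly $2\int_0^t \deg(X_s)\,ds$; then $\Eu_\rho^G[N_t] = 1 + 2\int_0^t \Eu_\rho^G[\deg(X_s)]\,ds$ by the optional stopping / Dynkin formula (non-explosivity guarantees the compensator is finite and everything is legitimate), and $|\zeta_t^{(\rho)}| \le N_t$ pointwise.

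The main obstacle is setting up the domination $|\zeta_t^{(\rho)}| \le N_t$ correctly and honestly: one must verify that every event which increases $|\zeta_t^{(\rho)}|$ corresponds to a ring of one of the tracked Poisson clocks at the current location of the distinguished walk, and that each such ring is counted with the right multiplicity so that nothing contributing to the cluster is missed. The subtlety is that when clusters other than the distinguished one merge with each other this does not change $|\zeta_t^{(\rho)}|$, so those events can be safely ignored — the coupling "gives priority to the path of $\rho$ and ignores certain collisions," as stated in the introduction. Once the domination and the compensator computation are in place, \eqref{eq:prop} follows immediately; there is no need for stationarity or unimodularity in this proposition (those enter only when combining with Lemmas \ref{lem:sizebias}, \ref{lem:mtg} and \ref{lem:unistat} to finish \thref{prop:main}).
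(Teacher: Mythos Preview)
Your proposal has the right overall architecture --- follow the distinguished particle $X_t=\xi_t^\rho$, use duality \eqref{eq:duality2}, and attribute growth of $|\zeta_t^{(\rho)}|$ to Poisson rings along the path --- but the central domination step is wrong as stated.

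You write that ``to get an upper bound it suffices to count, for each such ring, at most one new particle,'' and that this ``over-counts.'' It does not. When an arrow fires from a neighbor $u$ into $X_s$, the quantity $|\zeta_t^{(\rho)}|=N_t(X_t)=|\{w:\xi_t^w=X_t\}|$ increases not by $1$ but by $N_{s^-}(u)$, the number of \emph{original} labels already coalesced at $u$. Likewise, when $X$ jumps onto a site $v$, the count increases by $N_{s^-}(v)$. These increments can be arbitrarily large, so a counting process that adds $1$ per ring does \emph{not} dominate $|\zeta_t^{(\rho)}|$ pointwise, and your compensator argument collapses. What you need is not a pointwise bound on the increment but a bound on its \emph{conditional expectation}: one must show that, for the vertex $v$ being absorbed, $\Eu[N_{s^-}(v)\mid\text{path}]\le 1$.

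The paper achieves exactly this, and the device is the part you glossed over (``ignores certain collisions''). Conditioning on the path $\gamma=(X_s)_{s\ge 0}$, one defines a modified CRW $\xi_t(\gamma)$ in which particles off the path ignore the arrows that point \emph{into} $\gamma$; call its counts $N_t^\gamma$. This modification (i) only increases counts off the path, so $N_s(v)\le N_s^\gamma(v)$ for $v\ne X_s$, and (ii) makes $N^\gamma$ conditionally independent of the set $U_\gamma$ of arrows into the path. Crucially, $N_s^\gamma(v)$ is then the size at time $s$ of a voter-model cluster started from $\{v\}$ whose only asymmetry is that the path $\gamma$ can steal mass but never donates it; hence $r\mapsto |\zeta_r|$ is a supermartingale and $\Eu[N_s^\gamma(v)\mid\gamma]\le 1$. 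Plugging this into
\[
N_t(X_t)\;\le\;1+\sum_{((v,w),s)\in U_\gamma(t)}N_s^\gamma(v)\;+\sum_{s\le t:\,X_s\ne X_{s^-}}N_{s^-}^\gamma(X_s)
\]
and taking expectations gives $1+\Eu|U_\gamma(t)|+\Eu|\{\text{jumps up to }t\}|=1+2\int_0^t\Eu[\deg(X_s)]\,ds$. Without this decoupling-plus-supermartingale step, your $N_t$ bounds the number of \emph{merging events} along the path, not the number of \emph{particles} merged, and the inequality $|\zeta_t^{(\rho)}|\le N_t$ fails.
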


\begin{proof}
Since the graph is fixed, use $\Pu$ and $\Eu$ to denote probability and expectation with respect to the process on $(G,\rho)$. First double up the edge set to $F = \{(x,y)\colon xy \in E\}$. Independently of $(X_t)$ let $\{U_e\colon e \in F\}$ be a family of independent Poisson point processes with unit intensity on $\R_+$, one for each directed edge. We think of the points
$$\{(e,t):t \in U_e, e \in F\}$$ as arrows on the spacetime set $G \times \R_+$.

Define a version of coalescing random walk $\xi_t$ on $G$ using $(X_t)$ and $\{U_e\}$ as follows. Recall that for $ y\in V$, $\xi_t^y$ is the position at time $t$ of the particle that began at $y$. First, set $\xi_t^x = X_t$ for $t\geq 0$. Then, for $y \neq x$, let $\xi_0^y = y$ and follow these rules to determine $\xi_t^y$ for $t>0$.
\begin{enumerate}
\item If $\xi_t^y = X_t$, then $\xi_s^y = X_s$ for $s>t$.
\item If $\xi_{t^-}^y = v \neq X_t$ and $t \in U_{(v,w)}$, then $\xi_t^y = w$.
\end{enumerate}
In other words, particles remain stuck to $(X_t)$ once they hit it, and otherwise follow the arrows given by the $\{U_e\}$. For $t\geq 0$ and $v \in V$ define the particle count $N_t$ by
$$N_t(v) = |\{w\colon \xi_t^w = v\}|.$$

First, condition on $\gamma = \{X_t\colon t\geq 0\}$. For $v \in V$ let $S_v(\gamma) = \{t\ge 0 \colon  X_t=v\}$. Then, for $t>0$ let
$$U_{\gamma}(t) = \{((v,w),s) \colon  (v,w) \in F,\, s \in U_{(v,w)} \cap S_w(\gamma) \cap [0,t]\},$$
and let $U_{\gamma} = \bigcup_{t>0}U_{\gamma}(t)$. The set $U_\gamma$ consists of all arrows pointing towards the space-time path $\gamma$. Modify $\xi_t$ so that it ignores $U_{\gamma}$, denoting the modified process by $\xi_t(\gamma)$, with particle count $N_t^{\gamma}$.
 Note that $N_t(v) \leq N_t^{\gamma}(v)$ if $v \neq X_t$, and that conditional on $\gamma$, $\xi_t(\gamma)$ is independent of $U_{\gamma}$.\\

Since particles do not escape from $X_t$, it follows that
$$N_t(X_t) = 1 + \sum_{((v,w),s) \in U_{\gamma}(t)} N_s(v) + \sum_{s \leq t\colon X_s \neq X_{s^-}}N_{s^-}(X_s).$$
Using the above inequality we deduce the upper bound
$$N_t(X_t) \leq 1 + \sum_{((v,w),s) \in U_{\gamma}(t)} N_s^{\gamma}(v) + \sum_{s \leq t\colon X_s \neq X_{s^-}}N_{s^-}^{\gamma}(X_s).$$
Almost surely, $U_{\gamma}(t)$ and $\{s \leq t\colon X_s \neq X_{s^-}\}$ are finite, and the above is a sum of finitely many terms. Conditioning, then noting that $\xi_t(\gamma)$ and thus $N_t^{\gamma}$ are conditionally independent of $U_{\gamma}$ given $\gamma$, we obtain
\begin{align}\label{eq1}
\Eu[\,N_t(X_t) \mid \gamma,\,U_{\gamma}\,] & \leq  1 + \sum_{((v,w),s) \in U_{\gamma}(t)} \Eu[\,N_s^{\gamma}(v) \mid \gamma\,] \\ \nonumber
&\qquad \qquad + \sum_{s \leq t\colon X_s \neq X_{s^-}}\Eu[\,N_{s^-}^{\gamma}(X_s)\mid \gamma\,].\nonumber
\end{align}
First we estimate $\Eu[\,N_s^{\gamma}(v) \mid \gamma\,]$ for $v \neq X_s$. Note that, conditioned on $\gamma$, $N_s^{\gamma}(v) = |\zeta_s|$ where $\{\zeta_r\colon  r \in [0,s]\}$ is a voter model with $\zeta_0 = \{v\}$. Looking backwards in time, this voter model has the transitions:
\begin{enumerate}
\item If $w \in \zeta_{r^-}$ and $r \in U_{(w,z)}$ with $z \neq X_{s-r}$ then $\zeta_r = \zeta_{r^-} \setminus \{w\}$.
\item If $w \in \zeta_{r^-}$ and $r \in U_{(z,w)}$ with $z \neq X_{s-r}$ then $\zeta_r = \zeta_{r^-} \cup \{z\}$.
\item If $w \in \zeta_{r^-}$ and $w = X_{s-r}$ then $\zeta_r = \zeta_{r^-} \setminus \{w\}$.
\end{enumerate}
Transitions 1.\ and 2.\ respectively increase and decrease $|\zeta_r|$ by $1$ at the same rate, while transition 3.\ decreases $|\zeta_r|$, so $|\zeta_r|$ is a supermartingale. Therefore,
$$\Eu[N_s^{\gamma}(v) \mid \gamma\,] = \Eu[\,|\zeta_s|\,] \leq \Eu[\,|\zeta_0|\,] = 1.$$
Next we estimate $N_{s^-}^{\gamma}(X_s)$. However, $N_{s^-}^{\gamma}(X_s) = |\zeta_s|$ for the same process, except with $\zeta_0 = \{X_0\}$. Therefore $\Eu[N_{s^-}^{\gamma}(X_s) \mid \,\gamma\,] \leq 1$ as well. Plugging into \eqref{eq1},
\begin{equation}\label{eq2}
\Eu[\,N_t(X_t) \mid \gamma,\,U_{\gamma}\,] \leq 1 + |U_{\gamma}(t)| + |\{s \leq t\colon X_s \neq X_{s^-}\}|.
\end{equation}
Conditioned on $\gamma$, $\{\,|U_{\gamma}(t)|\colon t\geq 0\}$ is a counting process with time-varying intensity $\deg(X_t)$, so
$$\Eu[\,|U_{\gamma}(t)|\,\mid \,\gamma\,] = \int_0^t \deg(X_s)ds.$$
Without conditioning on $\gamma$, $\{\,|\{s \leq t\colon X_s \neq X_{s^-}\}|\colon  t \geq 0\}$ is a counting process with adapted intensity $\deg(X_t)$.
Taking $\Eu[\,\cdot\,\mid \,\gamma\,]$ in \eqref{eq2} gives
$$\Eu[\,N_t(X_t) \mid \,\gamma\,] \leq 1 + \int_0^t \deg(X_s)ds + |\{s \leq t\colon X_s \neq X_{s^-}\}|,$$
then taking $\Eu[\,\cdot\,]$, noting the jump rate of $X_s$ is $\deg(X_s)$ and using Fubini's theorem, we find
$$\Eu[\,N_t(X_t)\,] \leq 1 + 2\int_0^t \Eu[\,\deg(X_s)\,]ds.$$
With respect to our construction, $\xi_t^{\rho} = X_t$ for $t\geq 0$. Therefore, the result follows from the duality relation between coalescing random walk and the voter model, since
\[\zeta_t^{(\rho)} \stackrel{d}{=} \{x\colon \xi_t^x = \xi_t^{\rho}\}.\qedhere\]
\end{proof}

Finally we complete the proof of  \thref{prop:main}.
\begin{proof}[Proof of  \thref{prop:main}]
% The stationarity property of
 Lemma \ref{lem:unistat} ensures that $\E[\deg(X_t)] = \mathbf E[\deg(\rho)]$ for $t\ge 0$. Apply this to  \thref{lem:clusbnd}, so when we take $\Eo[\cdot]$ of \eqref{eq:prop} and apply Fubini's theorem we obtain
$$\E[ \ |\zeta_t^{(\rho)}| \ ] \leq 1 + 2t \mathbf E[\deg(\rho)]$$
for $t \geq 0$.
 % Thus, there exists $c>0$ such that $\E[ |\zeta_t^{(\rho)} | ] \leq ct$ for all $ t\geq 1$. 
 We conclude the proof by observing that, by Lemma \ref{lem:sizebias}, we have $\E[ |\zeta_t^{\rho} |^2 ] = \E[ |\zeta_t^{(\rho)} | ].$
\end{proof}

%\section*{Acknowledgments}
%\tom{We thank Omer Angel for making us aware of Arratia's coupling of CRW and ARW.}

\bibliographystyle{alpha}
\bibliography{unimod}

\newcommand{\etalchar}[1]{$^{#1}$}
\begin{thebibliography}{BFGG{\etalchar{+}}16}

\bibitem[AL07]{aldous_lyons}
David Aldous and Russell Lyons.
\newblock Processes on unimodular random networks.
\newblock {\em Electron. J. Probab.}, 12:1454--1508, 2007.

\bibitem[BC12]{ergodic}
Itai Benjamini and Nicolas Curien.
\newblock Ergodic theory on stationary random graphs.
\newblock {\em Electron. J. Probab.}, 17:no. 93, 1--20, 2012.

\bibitem[BFGG{\etalchar{+}}16]{crw}
Itai Benjamini, Eric Foxall, Ori Gurel-Gurevich, Matthew Junge, and Harry
  Kesten.
\newblock Site recurrence for coalescing random walk.
\newblock {\em Electron. Commun. Probab.}, 21:12 pp., 2016.

\bibitem[BPP14]{benjamini2014anchored}
Itai Benjamini, Elliot Paquette, and Joshua Pfeffer.
\newblock Anchored expansion, speed, and the hyperbolic poisson voronoi
  tessellation.
\newblock {\em arXiv preprint arXiv:1409.4312}, 2014.

\bibitem[BPS12]{MR3052399}
Martin~T. Barlow, Yuval Peres, and Perla Sousi.
\newblock Collisions of random walks.
\newblock {\em Ann. Inst. Henri Poincar\'e Probab. Stat.}, 48(4):922--946,
  2012.

\bibitem[BS01]{MR1873300}
Itai Benjamini and Oded Schramm.
\newblock Recurrence of distributional limits of finite planar graphs.
\newblock {\em Electron. J. Probab.}, 6:no. 23, 13, 2001.

\bibitem[Cur]{CurienNotes}
Nicolas Curien.
\newblock Random graphs: The local convergence point of view.
\newblock Unpublished lecture notes. Available at
  https://www.math.u-psud.fr/~curien/cours/cours-RG-V3.pdf.

\bibitem[Cur16]{MR3520011}
Nicolas Curien.
\newblock Planar stochastic hyperbolic triangulations.
\newblock {\em Probab. Theory Related Fields}, 165(3-4):509--540, 2016.

\bibitem[Gri78]{griffeath}
David Griffeath.
\newblock Annihilating and coalescing random walks on $\mathbb{Z}^d$.
\newblock {\em Zeitschrift für Wahrscheinlichkeitstheorie und Verwandte
  Gebiete}, 46(1):55--65, 1978.

\bibitem[HP15]{tom}
Tom Hutchcroft and Yuval Peres.
\newblock Collisions of random walks in reversible random graphs.
\newblock {\em Electron. Commun. Probab.}, 20:6 pp., 2015.

\bibitem[Pen03]{MR1986198}
Mathew Penrose.
\newblock {\em Random geometric graphs}, volume~5 of {\em Oxford Studies in
  Probability}.
\newblock Oxford University Press, Oxford, 2003.

\bibitem[Rou15]{MR3406589}
Arnaud Rousselle.
\newblock Recurrence or transience of random walks on random graphs generated
  by point processes in {$\Bbb{R}^d$}.
\newblock {\em Stochastic Process. Appl.}, 125(12):4351--4374, 2015.

\end{thebibliography}

\end{document}